\documentclass[11pt,a4paper,openbib]{article}
\usepackage[ansinew]{inputenc}
\usepackage{amssymb}
\usepackage{amsmath}
\usepackage{amsthm} 
\usepackage{amsxtra} 
\usepackage{graphicx}
{\tiny }
\newcommand {\rel} {{\mathbb R}}

\newcommand {\nat} {{\mathbb N}}

\newcommand {\ganz} {{\mathbb Z}}

\newcommand {\sphere} {{\mathbb S}}

\newcommand {\Will} {{\mathcal{W} }}
\newcommand {\Wil} {{\mathcal{E} }}

\begin{document}
	
\newtheorem{theorem}{Theorem}[section]
\newtheorem{definition}{Definition}[section]
\newtheorem{proposition}{Proposition}[section]
\newtheorem{lemma}{Lemma}[section]
\newtheorem{corollary}{Corollary}[section]
\newtheorem{remark}{Remark}[section]
\newtheorem{example}{Example}
	
\title{Corrections to: 
``The Willmore flow of Hopf-tori in the $3$-sphere''}
	
	\author{Ruben Jakob}

	\maketitle

\begin{abstract}
This erratum addresses a logical mistake in the author's 
article [Jakob, R. The Willmore flow of Hopf-tori in the $3$-sphere. Journal of Evolution Equations {\bf 23}, No. 72 (2023)] which resulted in two wrong assertions in parts (II) and (III) of Theorem 1 in the author's cited paper  
and in an inaccuracy in the formulation of the second part of Proposition 6 in that paper. We will not only point out these mistakes and their corrections, but we will additionally give a concrete counterexample to Statement (III) in Theorem 1 of 
the author's cited article which will automatically 
imply the optimality of the corrected version of that 
statement, as it is formulated in this erratum. 
\end{abstract}

\section{Corrections of mistakes in \cite{Ruben.Hopf.Willmore.2023}}
\label{Corrections}
In the author's article \cite{Ruben.Hopf.Willmore.2023}  
the author investigated the long-term behaviour of the classical Willmore flow 
\begin{equation}  \label{Willmore.flow}
\partial_t F_t = -\frac{1}{2}  \,
\Big{(} \triangle_{F_t}^{\perp} \vec H_{F_t} + Q(A^{0}_{F_t})
(\vec H_{F_t}) \Big{)} 
\end{equation}
i.e. the $L^2$-gradient-flow of the Willmore energy 
\begin{equation} \label{Willmore.functional}
\Will(F):= \int_{\Sigma} 1 + \frac{1}{4} \, 
\mid \vec H_F \mid^2 \, d\mu_F,
\end{equation}
being restricted to families of smooth immersions
$F_t$ of some compact smooth torus $\Sigma$ into the standard 
$3$-sphere ``$\sphere^3$''.\\ 
The main result, Theorem 1, in \cite{Ruben.Hopf.Willmore.2023} addresses the long-term behaviour and full convergence of flow lines of flow (\ref{Willmore.flow}) and claims in its 
third part that every flow line 
$\{F_t\}$ of flow (\ref{Willmore.flow}) which starts 
moving in a - so-called - \emph{simple}
\footnote{See here Definition 3 in \cite{Ruben.Hopf.Willmore.2023}.}
parametrization $F_0:\Sigma \longrightarrow \sphere^3$ 
of a Hopf-torus in $\sphere^3$ with Willmore energy 
$\Will(F_0) \leq \frac{8\pi^2}{\sqrt{2}}$ 
consists of smooth embeddings of $\Sigma$ into $\sphere^3$ for sufficiently large $t$ and converges - up to time dependent, smooth reparametrization - fully and smoothly 
to a smooth embedding of the Clifford torus in $\sphere^3$ 
- at least up to some conformal transformation of $\sphere^3$. However, {\bf the claimed threshold $\frac{8\pi^2}{\sqrt{2}}$ has turned out to be too large} and has to be decreased exactly to the number $4\pi^2$. \\
{\bf Secondly}, in the second statement of Theorem 1 in \cite{Ruben.Hopf.Willmore.2023} addressing subconvergence 
of any flow line of flow (\ref{Willmore.flow})
which starts moving in a \emph{simple} parametrization of 
some Hopf-torus in $\sphere^3$ - without any restriction 
on its Willmore energy - the smooth limit immersion  
$\hat F$ in formula (14) of \cite{Ruben.Hopf.Willmore.2023} 
{\bf does in general not have to be simple} anymore. \\
{\bf Thirdly}, in the second part of Proposition 6 of \cite{Ruben.Hopf.Willmore.2023} one has to replace the 
interval $\big{(}2\pi, \frac{8\pi}{\sqrt{2}}\big{]}$
by the disjoint union of intervals $(2\pi, 4\pi) \cup  
\big{(}4\pi,\frac{8\pi}{\sqrt{2}}\big{]}$, i.e.    
there indeed is no critical value of the elastic energy 
$\Wil:C^{\infty}_{\textnormal{reg}}(\sphere^1,\sphere^2)
\longrightarrow \rel$
\footnote{As in Proposition 3 of 
\cite{Ruben.Hopf.Willmore.2023} we shall define the elastic energy $\Wil$ of some smooth, regular, closed path 
$\gamma:\sphere^1 \longrightarrow \sphere^2$ by 
$\Wil(\gamma):=
\int_{\sphere^1} 1+|\vec{\kappa}_{\gamma}|^2 \, \,d\mu_{\gamma}$.}   
in the set of real numbers $(2\pi, 4\pi) \cup \big{(}4\pi,\frac{8\pi}{\sqrt{2}}\big{]}$, 
whereas {\bf the number $4\pi$ is actually the elastic energy 
of an elastic curve}, namely of the elastica 
which consists of two consecutive loops through an 
arbitrarily chosen great circle in $\sphere^2$.  \\ 
These three mistakes appeared in highlighted statements of 
\cite{Ruben.Hopf.Willmore.2023}, and they were caused by only one subtle logical mistake, namely to apply the terminology of Definition 3 in \cite{Ruben.Hopf.Willmore.2023} to 
general smooth {\bf immersions} of $\Sigma$ into $\sphere^3$ - which don't necessarily have to be 
embeddings - within the proofs of Statements (II) and (III) 
of Theorem 1 in \cite{Ruben.Hopf.Willmore.2023}. 
More precisely, in Definition 3 of \cite{Ruben.Hopf.Willmore.2023} the author decided to distinguish smooth immersions 
$F:\Sigma \longrightarrow \sphere^3$ in terms of their induced maps in singular homology, precisely in terms of 
the homomorphisms 
\begin{equation} \label{homological.stuff}
(F_{*})_2: H_2(\Sigma,\ganz) \longrightarrow
H_{2}(F(\Sigma),\ganz),
\end{equation}
in degree $2$. Since $F:\Sigma \longrightarrow F(\Sigma)\subset \sphere^3$ is a continuous map between 
two topological spaces, the homomorphism in (\ref{homological.stuff}) is certainly well-defined, but it {\bf does not necessarily have to be a homomorphism from $\ganz$ to $\ganz$}, in which case  
the image of a generator of $H_2(\Sigma,\ganz)$ under $(F_{*})_2$ in $H_{2}(F(\Sigma),\ganz)$ would topologically characterize the immersion $F$, similarly 
to the classical notion of ``mapping degree'' 
for continuous maps from $\sphere^n$ to $\sphere^n$.  
However, this idea only works out for embeddings of 
$\Sigma$ into $\sphere^3$, not for immersions in general, 
because for a general immersion 
$F:\Sigma \longrightarrow \sphere^3$ the Abelian group  
$H_{2}(F(\Sigma),\ganz)$ might be far more complicated than $\ganz$. In conclusion, the original introduction of ``simple immersions'' in terms of the algebraic criterion in Definition 3 of \cite{Ruben.Hopf.Willmore.2023} is not applicable throughout the proof of Theorem 1 in \cite{Ruben.Hopf.Willmore.2023} and therefore useless - 
actually even misleading.  
Instead, one should have rather introduced 
in \cite{Ruben.Hopf.Willmore.2023} the following more expedient terminology, which we quote from the author's paper \cite{Ruben.MIWF.2025}; 
see Definition 4.3 in \cite{Ruben.MIWF.2025}.           
\begin{definition} \label{Simple.map}
Let $\Sigma$ be a compact smooth torus and 
$F:\Sigma \longrightarrow \sphere^3$ a smooth immersion. 
We call $F$ a \emph{simple parametrization} of the 
immersed torus $F(\Sigma) \subset \sphere^3$, if there holds $\sharp \{F^{-1}(z)\} =1$ in $\mathcal{H}^2$-a.e. 
$z \in F(\Sigma)$. 
\end{definition}
One should compare Definition \ref{Simple.map} 
with Remark 4.2 in \cite{Ruben.MIWF.2025}, where the author points out why any smooth immersion 
$F:\Sigma \longrightarrow \sphere^3$ 
satisfying $\Will(F)< 4 \pi^2$ must be a 
\emph{simple parametrization} of $F(\Sigma)$ 
in the sense of Definition \ref{Simple.map} 
above, and secondly that in the special case in which $F(\Sigma)$ is actually embedded into $\sphere^3$, i.e. 
a smooth compact torus in $\sphere^3$ in the sense of differential topology, the condition in Definition \ref{Simple.map} above is satisfied, if and only if 
the induced homomorphism in 
equation (\ref{homological.stuff}) is isomorphic, 
i.e. if and only if a generator of $H_2(\Sigma,\ganz)\cong \ganz$ is taken by $(F_{*})_2$ to a generator of   
$H_{2}(F(\Sigma),\ganz)\cong \ganz$. 
This latter fact disguised the basic logical mistake 
in \cite{Ruben.Hopf.Willmore.2023} which was pointed out above, below equation (\ref{homological.stuff}). \\
Now, since the condition ``$\Will(F)< 4 \pi^2$'' forces $F$ to be a \emph{simple parametrization} of the immersed torus $F(\Sigma)\subset \sphere^3$ in the sense of our revised Definition \ref{Simple.map} and since the Willmore energy along any flow line of the Willmore flow (\ref{Willmore.flow}) cannot increase, any flow line 
of this flow in $\sphere^3$ starting in a Hopf-torus with Willmore energy below $4 \pi^2$ can - indeed - only smoothly subconverge to a \emph{simple} Willmore-Hopf-torus in 
$\sphere^3$, as the author - incorrectly - claimed 
in Statement (II) of Theorem 1 in \cite{Ruben.Hopf.Willmore.2023} without any energy 
condition on $F_0$. 
One should therefore feel confident that the corrected threshold $4 \pi^2$ in Statement (III) of Theorem 1 in 
\cite{Ruben.Hopf.Willmore.2023} will be sufficiently small such that Theorem \ref{corrected.StatementIII.Theorem.1}
below can be proved along the lines of the proof 
on pp. 24--30 in \cite{Ruben.Hopf.Willmore.2023}, 
at least without any significant changes. 
Now, on a more technical level and according to the beginning of the proof of Statement (III) of Theorem 1 in \cite{Ruben.Hopf.Willmore.2023} 
one verifies first of all that any path $\gamma$ in 
$C^{\infty}_{\textnormal{reg}}(\sphere^1,\sphere^2)$ 
with elastic energy $\Wil(\gamma)<4\pi$ 
can only parametrize a simple loop through its trace. 
Hence, the proof of Proposition 6 in \cite{Ruben.Hopf.Willmore.2023} shows - without any alteration - that there \emph{cannot be any critical value} of $\Wil$ in the interval $(2\pi,4\pi)$, and therefore 
the entire beginning of the proof of Statement (III) of Theorem 1 on pp. 24--25 in \cite{Ruben.Hopf.Willmore.2023} works out without any further complication, provided 
one requires ``$\Will(F_0)< 4 \pi^2$'' 
for the initial immersion $F_0$ - and not only 
``$\Will(F_0)\leq \frac{8 \pi}{\sqrt 2}$'', as 
incorrectly claimed in Statement (III) of Theorem 1 in 
\cite{Ruben.Hopf.Willmore.2023} - and then systematically 
replaces $\frac{8 \pi}{\sqrt 2}$ by $4\pi$ 
on pp. 24--25 in \cite{Ruben.Hopf.Willmore.2023}.      
Moreover, formula (82) on p. 28 in \cite{Ruben.Hopf.Willmore.2023} becomes redundant, 
which actually simplifies the argument
at that point of Step 3 of the entire proof,
because the constructed parametrizations 
$Y_t$ of the Hopf-tori 
$\pi^{-1}(\textnormal{trace}(P(t,0,\gamma_0)))$ are  
indeed simple in the sense of Definition \ref{Simple.map} above because of formula (81) and the second part of Remark 4 in \cite{Ruben.Hopf.Willmore.2023}. One can therefore easily conclude that 
\begin{equation} \label{nice.estimate} 
\Will(Y_t)= \Will(\pi^{-1}(\textnormal{trace}(P(t,0,\gamma_0))))
<8 \pi, \,\,\, \textnormal{for every}\,\, t\geq t_0, 
\end{equation}
for some sufficiently large time $t_0$, on account of Propositions 4 and 6 in \cite{Ruben.Hopf.Willmore.2023} 
and on account of the strict monotonicity of the Willmore energy along non-constant flow lines of the Willmore flow.
Hence, the Li-Yau inequality implies immediately that     
$Y_t:\Sigma \stackrel{\cong}\longrightarrow 
\pi^{-1}(\textnormal{trace}(P(t,0,\gamma_0)))$ 
is a smooth diffeomorphism, for every $t\geq t_0$, 
just as desired.
Finally, below formula (85) on p. 29 in \cite{Ruben.Hopf.Willmore.2023} one can slightly simplify the entire argument in the following way: 
By means of inequality (\ref{nice.estimate}) above, 
the $C^m$-convergence of the immersions $Y_t$ 
to some smooth immersion $F^*$ in formula (85) on p. 29 in \cite{Ruben.Hopf.Willmore.2023} and the lower semicontinuity 
of the Willmore functional w.r.t. $C^m$-convergence, one 
obtains for this limit immersion $F^*$ that
$\Will(F^*)\leq \liminf_{t\to \infty} \Will(Y_t) <8\pi$, hence - again on account of the Li-Yau inequality - that the limit immersion $F^*$ in formula (85) of \cite{Ruben.Hopf.Willmore.2023} must be a smooth diffeomorphism from $\Sigma$ onto its image, which indeed has to be the Clifford-torus in $\sphere^3$, 
as explained below formula (85) in \cite{Ruben.Hopf.Willmore.2023}. 
Hence, up to these minor modifications one can prove as in \cite{Ruben.Hopf.Willmore.2023} the following 
convergence result for the Willmore flow 
of Hopf-tori in $\sphere^3$. 
\begin{theorem}[Corrected Statement (III) in 
Theorem 1 of \cite{Ruben.Hopf.Willmore.2023}]
\label{corrected.StatementIII.Theorem.1} 	   
Let $F_0:\Sigma \longrightarrow \sphere^3$ be a smooth immersion of a smooth compact torus $\Sigma$ into  $\sphere^3$ which parametrizes a Hopf-torus with 
Willmore energy $\Will(F_0) < 4\pi^2$. Then 
there is a smooth family of smooth diffeomorphisms
$\Psi_t:\Sigma \longrightarrow \Sigma$ such that the reparametrization $\{\mathcal{P}(t,0,F_0) \circ \Psi_t\}_{t\geq 0}$ of the flow line $\{\mathcal{P}(t,0,F_0)\}_{t\geq 0}$ of the Willmore flow (\ref{Willmore.flow}) in $\sphere^3$, starting in $F_0$ at time $t=0$, consists of smooth diffeomorphisms between $\Sigma$ and their images in $\sphere^3$ for sufficiently large $t$, and $\{\mathcal{P}(t,0,F_0) \circ \Psi_t\}$ converges fully in $C^m(\Sigma,\rel^4)$, for each $m\in \nat_0$, to a smooth diffeomorphism between $\Sigma$ and the Clifford torus in $\sphere^3$ - up to some appropriate isoclinic rotation of $\sphere^3$.   
\end{theorem}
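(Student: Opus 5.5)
The plan is to follow the proof of Statement (III) of Theorem 1 on pp. 24--30 in \cite{Ruben.Hopf.Willmore.2023}, with $\frac{8\pi}{\sqrt 2}$ replaced by $4\pi$ throughout, and to insert the simplifications described above.

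First, reduce to the elastic flow in $\sphere^2$. Since $F_0$ parametrizes a Hopf-torus, we write $F_0(\Sigma)=\pi^{-1}(\textnormal{trace}(\gamma_0))$ for a smooth regular closed curve $\gamma_0:\sphere^1\to\sphere^2$, where $\pi$ is the Hopf fibration. The hypothesis $\Will(F_0)<4\pi^2$ forces $F_0$ to be a simple parametrization in the sense of Definition \ref{Simple.map}, by Remark 4.2 in \cite{Ruben.MIWF.2025}. The Pinkall-type identity $\Will(\pi^{-1}(\gamma))=\pi\,\Wil(\gamma)$ for simple parametrizations (Proposition 3 in \cite{Ruben.Hopf.Willmore.2023}) then gives $\Wil(\gamma_0)<4\pi$. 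Next, Proposition 4 of \cite{Ruben.Hopf.Willmore.2023} says the Willmore flow preserves the Hopf structure and corresponds to the elastic flow $P(t,0,\gamma_0)$ of $\Wil$ in $\sphere^2$. Hence $\Wil(P(t,0,\gamma_0))<4\pi$ for all $t$. Any curve with $\Wil<4\pi$ traces a simple loop, so every $P(t,0,\gamma_0)$ is a simple closed curve.

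Second, establish convergence of the curve flow. By the subconvergence result (Statement (II), whose proof is unaffected), $P(t,0,\gamma_0)$ subconverges smoothly to an elastica $\gamma^*$ with $\Wil(\gamma^*)<4\pi$. By the corrected Proposition 6, there is no critical value of $\Wil$ in $(2\pi,4\pi)$. So $\Wil(\gamma^*)=2\pi$, i.e. $\gamma^*$ is a great circle traversed once. Full convergence, rather than only subconvergence, follows from the Łojasiewicz--Simon argument of Step 2 in \cite{Ruben.Hopf.Willmore.2023}, or from the isolatedness of the minimum modulo rotations. This holds up to a rotation of $\sphere^2$, which lifts to an isoclinic rotation of $\sphere^3$.

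Third, lift back to $\sphere^3$ and obtain embeddedness, which is the step needing the most care. Construct the parametrizations $Y_t$ of $\pi^{-1}(\textnormal{trace}(P(t,0,\gamma_0)))$ as in Step 3, formula (81). These are simple by the second part of Remark 4 in \cite{Ruben.Hopf.Willmore.2023}. Since $\Wil(P(t,0,\gamma_0))\to 2\pi$, we get $\Will(Y_t)\to 2\pi^2<8\pi$, and by strict monotonicity estimate (\ref{nice.estimate}) holds for all $t\ge t_0$. The Li--Yau inequality then shows that each $Y_t$ is an embedding, hence a diffeomorphism onto its image.

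The main obstacle is relating $Y_t$ to the actual flow line $\mathcal{P}(t,0,F_0)$. We need diffeomorphisms $\Psi_t$ of $\Sigma$, depending smoothly on $t$, with $\mathcal{P}(t,0,F_0)\circ\Psi_t=Y_t$. The plan is to take $\Psi_t$ from the uniqueness of the Willmore flow modulo tangential reparametrization: solve the ODE generated by the tangential part of $\partial_t Y_t$, exactly as in \cite{Ruben.Hopf.Willmore.2023}. Finally, the $C^m$-convergence $Y_t\to F^*$ of formula (85), together with lower semicontinuity, gives $\Will(F^*)<8\pi$. By Li--Yau, $F^*$ is therefore an embedding of $\Sigma$. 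Its image is $\pi^{-1}$ of a great circle, namely the Clifford torus up to an isoclinic rotation.
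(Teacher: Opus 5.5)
Your proposal is correct and follows the paper's route. You transfer $\Will(F_0)<4\pi^2$ to $\Wil(\gamma_0)<4\pi$ via simplicity, and use the absence of critical values of $\Wil$ in $(2\pi,4\pi)$ to force the limit to be a once-covered great circle. You then get simplicity of $Y_t$ from formula (81) and Remark 4, and apply Li--Yau twice: to $Y_t$ for $t\ge t_0$ via (\ref{nice.estimate}), and to $F^*$ via lower semicontinuity. The one phrase to correct is your claim that every $P(t,0,\gamma_0)$ is a ``simple closed curve'': $\Wil<4\pi$ only gives that the curve runs once through its trace, not that it is embedded. That weaker fact is all the simplicity of $Y_t$ needs, and embeddedness comes only from Li--Yau for large $t$.
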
 
\noindent
Now, taking here again Proposition 6 in \cite{Ruben.Hopf.Willmore.2023} and also Theorem \ref{bad.convergence} below into account, we can obtain 
the complete picture regarding 
\emph{convergence of the Willmore flow of Hopf-tori 
starting in a simple immersion - as introduced above in Definition \ref{Simple.map} - with Willmore energy below the originally claimed threshold $\frac{8\pi^2}{\sqrt{2}}$} 
by means of the following dichotomy.
\begin{theorem}
[Alternative revised version of Statement (III) in 
Theorem 1 of \cite{Ruben.Hopf.Willmore.2023}] 
\label{Dichotomy} 	
Let $F_0:\Sigma \longrightarrow \sphere^3$ be a smooth and simple immersion of a smooth compact torus $\Sigma$ into $\sphere^3$ which parametrizes a Hopf-torus 
with Willmore energy $\Will(F_0) \leq \frac{8\pi^2}{\sqrt{2}}$. Then one and only one of the 
following two alternatives must hold, and the second 
alternative cannot be ruled out in general.
\begin{itemize} 
\item[1)] 
There is a smooth family of smooth diffeomorphisms
$\Psi_t:\Sigma \longrightarrow \Sigma$ such that the reparametrization $\{\mathcal{P}(t,0,F_0) \circ \Psi_t\}_{t\geq 0}$ of the flow line $\{\mathcal{P}(t,0,F_0)\}_{t\geq 0}$ of the Willmore flow (\ref{Willmore.flow}) in $\sphere^3$, starting in $F_0$ 
at time $t=0$, converges fully in $C^m(\Sigma,\rel^4)$, for each $m\in \nat_0$, to a smooth embedding of the Clifford torus in $\sphere^3$ - up to some appropriate isoclinic rotation of $\sphere^3$. 
\item[2)] The Willmore energy of the flow line $\{\mathcal{P}(t,0,F_0)\}_{t\geq 0}$ of the Willmore flow (\ref{Willmore.flow}) in $\sphere^3$, starting 
in $F_0$ at time $t=0$, decreases strictly monotonically 
to the value $4\pi^2$, i.e. to the Willmore energy of 
an immersion which covers the Clifford torus exactly \emph{twice}.
\end{itemize}     	
\end{theorem}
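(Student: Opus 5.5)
The argument reduces the flow of Hopf-tori to the elastic flow of curves in $\sphere^2$, exactly as in the proof of Statement (III) of Theorem 1 in \cite{Ruben.Hopf.Willmore.2023}. By the invariance of Hopf-tori under the Willmore flow (Propositions 3 and 4 there), the flow line $\mathcal{P}(t,0,F_0)$ parametrizes, up to reparametrization, the Hopf-tori $\pi^{-1}(\textnormal{trace}(P(t,0,\gamma_0)))$. Here $P(t,0,\gamma_0)$ is the flow line of the elastic energy flow in $\sphere^2$ starting in a curve $\gamma_0$ with $\pi\circ F_0$ tracing $\gamma_0$. Because $F_0$ is simple, $\gamma_0$ can be chosen to traverse its trace once. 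We then have the relation $\Will(\mathcal{P}(t,0,F_0)) = \pi\,\Wil(P(t,0,\gamma_0))$. The hypothesis $\Will(F_0)\le \frac{8\pi^2}{\sqrt2}$ therefore becomes $\Wil(\gamma_0)\le \frac{8\pi}{\sqrt2}$.

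The energy $t\mapsto \Wil(P(t,0,\gamma_0))$ is non-increasing and bounded below by $2\pi$, so it converges to a limit $L\in[2\pi,\frac{8\pi}{\sqrt2}]$. By the subconvergence result (Statement (II) of Theorem 1 in \cite{Ruben.Hopf.Willmore.2023}), $L$ is the energy of an elastic curve in $\sphere^2$. By the corrected Proposition 6, no critical value lies in $(2\pi,4\pi)\cup(4\pi,\frac{8\pi}{\sqrt2}]$, hence $L\in\{2\pi,4\pi\}$. This gives the case split.

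\emph{Case $L=2\pi$}, i.e.\ $\lim_{t\to\infty}\Will=2\pi^2<4\pi^2$. There is a time $t_1$ with $\Will(\mathcal{P}(t_1,0,F_0))<4\pi^2$. Restarting the flow at $t_1$ and applying Theorem \ref{corrected.StatementIII.Theorem.1} yields full convergence to an embedded Clifford torus, up to an isoclinic rotation. This is alternative 1).

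\emph{Case $L=4\pi$}, i.e.\ the Willmore energy tends to $4\pi^2$. Strictness of the decrease follows because a stationary flow line would be a Willmore-Hopf torus of energy exactly $4\pi^2$. That torus covers the Clifford torus twice, by the characterization of elastica with $\Wil=4\pi$ as doubly traversed great circles. This is incompatible with simplicity of $F_0$, since simplicity would force $\Wil(\gamma_0)=2\pi$ instead. Under the flow, the energy is then strictly decreasing to $4\pi^2$, which is alternative 2).

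The two alternatives are mutually exclusive because their limiting energies differ: $2\pi^2$ versus $4\pi^2$. The main obstacle is the final claim that alternative 2) cannot be ruled out. For this I would use Theorem \ref{bad.convergence}, which supplies the counterexample: a simple closed curve $\gamma_0$ with $4\pi<\Wil(\gamma_0)\le\frac{8\pi}{\sqrt2}$ whose elastic flow converges to the doubly covered great circle. The idea is to start near the unstable double-loop elastica on its stable manifold, or to use a symmetry class preserved by the flow that excludes convergence to a simply covered circle. The Hopf lift of $\gamma_0$ is then a simple immersion realizing alternative 2).
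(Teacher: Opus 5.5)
Your reduction to the elastic flow and your case split are correct, and they are exactly how the paper obtains the dichotomy. The corrected Proposition 6 leaves only the limit energies $2\pi^2$ and $4\pi^2$. The first case is handed to Theorem \ref{corrected.StatementIII.Theorem.1} once the energy has dropped below $4\pi^2$. That the second case actually occurs is Theorem \ref{bad.convergence}, applied with $\delta<\frac{8\pi^2}{\sqrt2}-4\pi^2$. One small point: your strictness argument, like the paper's, tacitly uses that a non-stationary flow line cannot reach a critical point in finite time. Without that, you would have to carry simplicity from $F_0$ up to the time at which the flow becomes stationary.

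Your description of the counterexample, however, is wrong in two respects, and the mechanism you sketch would not produce it.

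\textbf{The initial curve cannot be embedded.} Any embedded closed curve in $\sphere^2$ is regularly homotopic to a singly traversed great circle $E$. The regular-homotopy invariant $\textnormal{ind}_2$ comes from the unit tangent field via $T^1\sphere^2\cong\textnormal{SO}(3)$ and $\pi_1(\textnormal{SO}(3))\cong\ganz/2$. It equals $1$ on $E$ but $0$ on $E\oplus E$. So the elastic flow of an embedded curve, which is a regular homotopy converging in $C^1$, can never converge to the double circle. What you need is only that $\gamma_0$ traverses its trace once, since that is what makes the Hopf lift simple. The paper's curve has exactly one self-intersection.

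\textbf{The double circle is stable, not unstable.} By Langer--Singer, $E\oplus E$ is an $\Wil$-stable critical point, so there is no stable-manifold argument to make. The paper's route is as follows.
\begin{itemize}
\item It perturbs $E\oplus E$ by $\phi_1N_E\oplus\phi_2N_E$, where $\phi_1=\Phi=-\phi_2$ and $\Phi$ is a bump that is flat at the base point.
\item The second variation then splits into two positive single-loop contributions. This gives $4\pi<\Wil(\gamma^\varepsilon)<13$ for small $\varepsilon>0$, with exactly one self-intersection.
\item The bound $\Wil(\gamma_{(m,n)})>\frac{6}{\sqrt2}\pi>13$ excludes every non-geodesic elastica as a limit, and energy monotonicity excludes the geodesic ones of energy $\geq 6\pi$. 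Only $E$ and $E\oplus E$ remain.
\item Invariance of $\textnormal{ind}_2$ along the flow rules out $E$, so the limit is $E\oplus E$.
\end{itemize}
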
  

\section{Optimality of the initial energy-threshold 
``$4 \pi^2$'' in Theorem \ref{corrected.StatementIII.Theorem.1}}
\label{Optimality}     
Regarding ``optimality'' we firstly recall Blatt's achievement in \cite{Blatt.2009}, Theorem 5.1, to prove optimality of Kuwert's and Sch\"atzle's famous initial condition ``Willmore energy below $8\pi$'' for flow lines of the classical Willmore flow of immersed $2$-spheres in $\rel^3$ 
to converge smoothly to a round sphere -- see \cite{Kuwert.Schaetzle.Annals}, Theorem 5.2. Similarly, combining Theorems 1.3 and 1.4 
in \cite{Dall.Acqua.Schaetzle.Mueller.2020} - see here 
also Lemma 3.8 in \cite{Dall.Acqua.Schaetzle.Mueller.2020} - the authors of that paper succeeded to prove optimality of the 
initial condition ``Willmore energy below $8\pi$'' 
for flow lines of the classical Willmore flow 
of tori of revolution in $\rel^3$ to 
converge smoothly to a diffeomorphic parametrization 
of the Clifford-torus in $\rel^3$. \\
Now, in Theorem \ref{bad.convergence} below we are going to prove {\bf optimality} of the initial energy-threshold 
``$4 \pi^2$'' appearing in Theorem \ref{corrected.StatementIII.Theorem.1} above. 
However, one should still notice here that our argument 
for optimality is both quantitatively and qualitatively different from the two optimality arguments mentioned above. Not only is the corrected threshold $4 \pi^2$ still much higher than the mentioned $8\pi$-threshold, but the way in which we produce a counterexample to the conclusion of Theorem \ref{corrected.StatementIII.Theorem.1} if we were to slightly 
raise the threshold $4 \pi^2$ to $4 \pi^2+\delta$, 
for any $\delta>0$, is logically different, because  
\emph{we will not and cannot prove any sort of divergence} of some particular flow line of flow (\ref{Willmore.flow}) in $\sphere^3$ starting with Willmore energy in 
$(4 \pi^2, 4 \pi^2+\delta)$. In Theorem \ref{bad.convergence} below we rather prove convergence of the Willmore energy 
along a particular flow line $\{F_t\}_{t \geq 0}$ of the Willmore flow (\ref{Willmore.flow}) in $\sphere^3$ 
- starting in some appropriately chosen \emph{simple} 
immersion $F_0$ with 
$\Will(F_0)\in (4 \pi^2, 4 \pi^2+\delta)$ - 
to the value $4 \pi^2$, which is exactly {\bf twice the Willmore energy of the Clifford-torus}. 
Obviously, this feature of the Willmore flow (\ref{Willmore.flow}) in $\sphere^3$ 
- here restricted to the class of immersed Hopf-tori -  
differs starkly from the above mentioned behaviour of the 
``ordinary'' Willmore flow in $\rel^3$, for which 
the energy level $8\pi$ plays a similar, but more critical r\^ole than the energy level $4 \pi^2$ does for 
the considered flow (\ref{Willmore.flow}) of Hopf-tori in $\sphere^3$.    
Now we prove the following \emph{new statement}.  
\begin{theorem}\label{bad.convergence} 
For every $\delta >0$ there is some  
smooth and simple parametrization $F_0$ of a Hopf-torus in $\sphere^3$ with Willmore energy in $(4\pi^2, 4\pi^2+\delta)$, such that the Willmore energy of the flow line $\{\mathcal{P}(t,0,F_0)\}_{t\geq 0}$ of the Willmore flow (\ref{Willmore.flow}) in $\sphere^3$, starting in $F_0$ at time $t=0$, decreases strictly monotonically 
to the value $4\pi^2$, i.e. to the Willmore energy of 
an immersion which covers the Clifford torus 
in $\sphere^3$ exactly \emph{twice}.   
\end{theorem}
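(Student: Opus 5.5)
We reduce the problem to the elastic flow of closed curves in $\sphere^2$ via the Hopf fibration $\pi:\sphere^3\to\sphere^2(\tfrac12)$. By Propositions 3 and 4 of \cite{Ruben.Hopf.Willmore.2023}, the Willmore flow (\ref{Willmore.flow}) of a Hopf-torus $\pi^{-1}(\textnormal{trace}(\gamma_0))$ is, up to reparametrization, the Hopf-lift of the elastic-energy flow $P(t,0,\gamma_0)$ of $\gamma_0$ in $\sphere^2$, with $\Will(\pi^{-1}(\gamma))=\pi\,\Wil(\gamma)$ under the normalization used there. The target value $4\pi^2$ then corresponds to the elastic energy $4\pi$, the energy of the great circle traversed twice. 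So it suffices to find, for every $\varepsilon>0$, a regular closed curve $\gamma_0$ with $\Wil(\gamma_0)\in(4\pi,4\pi+\varepsilon)$ whose lift is simple and along whose elastic flow the energy decreases strictly to $4\pi$.

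\textbf{Choice of initial curve.} We use symmetry. Let $\gamma_0$ be a small smooth perturbation of the doubly covered great circle $\{x_3=0\}$ that is invariant under the rotation $R$ by angle $\pi$ about the $x_3$-axis, in the sense $\gamma_0(s+\pi)=R\gamma_0(s)$, and that has only finitely many transversal self-intersections. An example is
\[
\gamma_0(s)=(\cos(2s)\cos(\eta\sin s),\ \sin(2s)\cos(\eta\sin s),\ \sin(\eta\sin s)),\qquad s\in[0,2\pi].
\]
Since $\sin(s+\pi)=-\sin s$, this is not $R$-invariant, so we modify it. Write $\gamma_0(s)=(\cos(2s)\,r(s),\sin(2s)\,r(s),h(s))$ with $h(s+\pi)=h(s)$ and $h$ not identically zero; then $\gamma_0(s+\pi)=\gamma_0(s)$, so this choice is an honest double cover, which we must avoid. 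Instead take the symmetry group generated by the rotation by $\pi$ about the $x_3$-axis composed with the reflection $x_3\mapsto -x_3$, i.e.\ $\sigma(x_1,x_2,x_3)=(-x_1,-x_2,-x_3)$, together with $\gamma_0(s+\pi)=\sigma(\gamma_0(s))$ after reparametrizing the winding. Concretely, take a figure-eight-type curve winding twice near the equator with $h(s)=\eta\sin s$ and angular coordinate $2s$: then $\gamma_0(s+\pi)$ has the same horizontal part and opposite height, which is the reflection $x_3\mapsto-x_3$. Since the flow is equivariant under isometries and solutions are unique, reflection symmetry in $\{x_3=0\}$ combined with the shift $s\mapsto s+\pi$ is preserved along the flow.

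For small $\eta$ we have $\Wil(\gamma_0)\in(4\pi,4\pi+\varepsilon)$, strictly larger because the doubly covered great circle is a strict local minimum modulo symmetries, and $\gamma_0$ is not a double cover. Its lift is therefore a simple immersion: the trace is covered once away from finitely many crossing points, whose preimages are finitely many fibres, a set of $\mathcal{H}^2$-measure zero.

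\textbf{Convergence.} Along the flow the energy is nonincreasing and bounded above by $4\pi+\varepsilon<\frac{8\pi}{\sqrt2}$. By Proposition 6 of \cite{Ruben.Hopf.Willmore.2023}, as corrected in Section \ref{Corrections}, the only possible subsequential limits, which exist by the subconvergence statement (II), are elastica with energy $2\pi$ (great circles) or $4\pi$ (doubly traversed great circles). The symmetry rules out the value $2\pi$: the reflection-plus-shift symmetry forces the rotation index, or winding about the $x_3$-axis, of $\gamma_t$ to remain $2$ by continuity, since this degree is a homotopy invariant of regular curves avoiding the poles. Hence any smooth limit has winding number $2$ and must be the doubly covered great circle. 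So the energy converges to $4\pi$, and it does so strictly monotonically, because a non-stationary flow line has strictly decreasing energy and $\gamma_0$ is not critical. Multiplying by $\pi$ gives $\Will\searrow4\pi^2$.

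\textbf{Main obstacle.} The key step is ensuring that the invariant survives in the limit and prevents descent to energy $2\pi$. Regular homotopy classes in $\sphere^2$ are only $\ganz_2$-valued, so the curve could in principle pass through a pole, and winding about the axis is not a regular-homotopy invariant. The first thing to try is a stronger symmetry: invariance under the rotation by $\pi$ about the $x_3$-axis combined with the reflection $x_3\mapsto-x_3$, which is a rotation by $\pi$ about a horizontal axis and hence orientation-preserving, so equivariance is available. The argument would then be that a curve invariant under a fixed-point-free action of order $2$ on the parameter circle cannot converge to a simply covered great circle, since the limit must be equivariant with a free $\ganz_2$-action and the great circle's induced action would be a half-turn of the circle. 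This holds only for specific circles, and those must then be excluded by a separate energy or turning argument. If this argument fails, the fallback is to use the symmetry to reduce the flow to a flow on half of the curve, a quotient in which the limit $4\pi$ becomes the minimal energy.
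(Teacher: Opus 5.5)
Your reduction to the elastic flow on $\sphere^2$ and your initial curve are sound. The curve is exactly $\exp_{E\oplus E}(\eta\sin(\theta/2)\,N_E)$, a regular curve with a single transversal crossing. Your use of the energy window to leave only the once- and twice-covered great circle as possible limits also matches the paper. The gap is at the decisive step: nothing you wrote rules out convergence to the simply covered great circle $E$, i.e.\ $\Wil\searrow 2\pi$.

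The winding number about the $x_3$-axis is not preserved. The symmetry $\gamma_t(s+\pi)=\rho(\gamma_t(s))$, with $\rho(x_1,x_2,x_3)=(x_1,x_2,-x_3)$, does nothing to keep $\gamma_t$ off the poles: a north-pole point at parameter $s$ paired with a south-pole point at $s+\pi$ is compatible with it, as you concede. Your proposed repair does not help either.
\begin{itemize}
\item The half-turn about the $x_3$-axis composed with $\rho$ is the antipodal map $-\textnormal{Id}$. It is not a rotation about a horizontal axis; it reverses the orientation of $\sphere^2$. Equivariance of the flow holds for all of $\textnormal{O}(3)$ anyway.
\item Your concrete $\gamma_0$ is not invariant under $-\textnormal{Id}$.
\item Every unit-speed great circle satisfies $C(s+\pi)=-C(s)$, so this symmetry excludes no great circle at all.
\end{itemize}
Your closing remarks about a ``separate energy or turning argument'' or a quotient flow leave the step open.

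The missing idea is that the $\ganz/2$-valued regular-homotopy invariant you dismissed is exactly sufficient, and it is what the paper uses. Via $T^1\sphere^2\cong\textnormal{SO}(3)$ and $\pi_1(\textnormal{SO}(3))\cong\ganz/2$, the unit tangent field of $E$ is a $2\pi$-rotation, the nontrivial class. That of $E\oplus E$ is a $4\pi$-rotation, the trivial class. The chain of regular homotopies is as follows.
\begin{itemize}
\item Your $\gamma_0$ is regularly homotopic to $E\oplus E$ by letting $\eta\to 0$.
\item The reparametrized flow on $[0,T]$ is a regular homotopy.
\item For large $T$ the curve is $C^1$-close to its limit, hence regularly homotopic to it.
\end{itemize}
So the limit lies in the trivial class and must be $E\oplus E$. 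Once the energy window has reduced the candidates to $E$ and $E\oplus E$, parity is all you need.

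Alternatively, your reflection symmetry $\rho$ can be completed.
\begin{itemize}
\item Suppose $c_k=\gamma_{t_k}\circ\varphi_k\to C$ with $C$ a simple great circle. The fixed-point-free involutions $\tau_k=\varphi_k^{-1}\circ(s\mapsto s+\pi)\circ\varphi_k$ satisfy $c_k\circ\tau_k=\rho\circ c_k$.
\item Since $C$ is an embedding, $\tau_k\to C^{-1}\circ\rho\circ C$ uniformly.
\item This limit cannot be the identity, because a uniform limit of fixed-point-free circle involutions never is. Hence it is the half-turn.
\item That gives $\rho(C(s))=C(s+\pi)=-C(s)$, which forces $C$ into the poles, a contradiction.
\end{itemize}
That, however, is not the argument you gave.

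A smaller point: you do not need, and did not prove, that $E\oplus E$ is a strict local minimizer modulo symmetries. As in the paper, it suffices that the Langer--Singer second variation at $E\oplus E$ is positive in your particular direction, the half-frequency mode $\sin(\theta/2)$. A Taylor expansion then gives $\Wil(\gamma_0)>4\pi$ for small $\eta$.
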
  
Before we start to prove this statement we should recall 
here the preparation in Section 4 of \cite{Ruben.Hopf.Willmore.2023} respectively 
in Section 4 of \cite{Ruben.MIWF.2025}, which explains 
why the assertion of Theorem \ref{bad.convergence}  
already follows from the following statement - a statement  
which is much easier to handle than Theorem \ref{bad.convergence} from a technical point of view. 
\begin{theorem}\label{bad.convergence.2} 
For every $\delta >0$ there is some  
smooth, regular and closed path 
$\gamma_0:\sphere^1\longrightarrow \sphere^2$ 
with only one self-intersection and
with elastic energy in $(4\pi, 4\pi+\delta)$, 
such that some appropriate smooth reparametrization 
of the flow line $\{\gamma_t\}_{t\geq 0}$ of the 
elastic energy flow in $\sphere^2$, 
i.e. of the geometric flow 
\begin{eqnarray}  \label{elastic.energy.flow}
\partial_t \tilde \gamma_t =
- \,\Big{(} 2 \, \Big{(}\nabla^{\perp}_{\frac{\tilde \gamma_t'}{|\tilde \gamma_t'|}} \Big{)}^2(\vec{\kappa}_{\tilde \gamma_t})
+ |\vec{\kappa}_{\tilde \gamma_t}|^2 \vec{\kappa}_{\tilde \gamma_t} + \vec{\kappa}_{\tilde \gamma_t} \Big{)}     
\equiv - \,\nabla_{L^2}\Wil(\tilde \gamma_t),
\end{eqnarray}
starting in $\gamma_0$, converges fully and smoothly 
to a \emph{geodesic} elastica in $\sphere^2$ 
which covers some great circle in $\sphere^2$ exactly twice.    
\end{theorem}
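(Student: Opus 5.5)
We construct $\gamma_0$ as a small perturbation of the doubly covered great circle $c_2$. Let $c_2(\varphi):=(\cos 2\varphi,\sin 2\varphi,0)$ on $\sphere^1\cong\rel/2\pi\ganz$. It is a smooth, regular critical point of $\Wil$, and a short computation gives $\Wil(c_2)=4\pi$. The symmetry of $c_2$ under the involution $\varphi\mapsto\varphi+\pi$ is the key structural feature we will exploit.

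\textbf{Step 1: the initial curve.} We perturb $c_2$ in the normal direction $e_3$:
\[
\gamma_0^\varepsilon(\varphi):=\frac{c_2(\varphi)+\varepsilon\,\eta(\varphi)\,e_3}{|c_2(\varphi)+\varepsilon\,\eta(\varphi)\,e_3|},
\]
where $\eta(\varphi)=\sin\varphi$. The two sheets over a point of the great circle then have heights $\varepsilon\sin\varphi$ and $-\varepsilon\sin\varphi$. These coincide only where $\sin\varphi=0$, which happens at the single geometric point $c_2(0)=c_2(\pi)$, and the sheets cross there transversally. So for small $\varepsilon$ the curve $\gamma_0^\varepsilon$ is regular and closed with exactly one self-intersection. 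Moreover $\gamma_0^\varepsilon\to c_2$ smoothly, so $\Wil(\gamma_0^\varepsilon)\to 4\pi$.

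We also need $\Wil(\gamma_0^\varepsilon)>4\pi$. This follows from the corrected Proposition 6 of \cite{Ruben.Hopf.Willmore.2023}: there is no closed elastica with energy in $(2\pi,4\pi)$, and a regular closed curve with one self-intersection has energy at least $4\pi$. A cleaner route is the expansion $\Wil(\gamma_0^\varepsilon)=4\pi+\varepsilon^2 Q(\eta)+O(\varepsilon^3)$, where $Q$ is the second variation of $\Wil$ at $c_2$. We then check by Fourier analysis on the Jacobi-type operator of the geodesic elastica that $Q(\sin\varphi)>0$, modulo rotations, which produce only the modes $\cos 2\varphi$ and $\sin 2\varphi$. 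Given $\delta$, we then choose $\varepsilon$ small.

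\textbf{Step 2: the flow stays in a neighbourhood of $c_2$.} We would like a Łojasiewicz–Simon type stability result near $c_2$. The difficulty is that $c_2$ might not be a local minimiser: perturbations that \emph{separate} the two sheets can lower the energy towards the once-covered circle. We avoid this with a symmetry. The reflection $R:(x_1,x_2,x_3)\mapsto(x_1,x_2,-x_3)$ combined with the shift $\varphi\mapsto\varphi+\pi$ maps $\gamma_0^\varepsilon$ to itself, since $\eta(\varphi+\pi)=-\eta(\varphi)$. Flow \eqref{elastic.energy.flow} is geometric and has unique solutions, so this symmetry is preserved for all $t$: $\gamma_t(\varphi+\pi)=R\gamma_t(\varphi)$. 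Within this symmetric class, the $\pi$-periodic normal perturbations of $c_2$ (which would detach the sheets into a single circle) are excluded. The remaining second variation should be nonnegative, with kernel spanned only by rotations that commute with $R$, and these are exactly the rotations about the $e_3$-axis. So $c_2$ is an integrable, hence Łojasiewicz-stable, local minimiser of $\Wil$ restricted to the symmetric class.

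Applying the Łojasiewicz–Simon inequality for $\Wil$ near $c_2$ then yields three conclusions for small $\varepsilon$ (the standard argument as in Section 4 of \cite{Ruben.Hopf.Willmore.2023}, after reparametrization):

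\begin{itemize}
\item[(a)] global existence of the flow in a $C^{k}$-neighbourhood of the orbit of $c_2$;
\item[(b)] full smooth convergence of a reparametrization of $\gamma_t$ to a critical point $\gamma_\infty$ in this neighbourhood;
\item[(c)] $\Wil(\gamma_\infty)=4\pi$, with $\gamma_\infty$ a rotated copy of $c_2$, by integrability.
\end{itemize}

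Strict monotonicity holds because $\gamma_0^\varepsilon$ is not critical; it is not a great circle covered twice. Theorem \ref{bad.convergence} then follows by Hopf lifting via Section 4 of \cite{Ruben.Hopf.Willmore.2023}. The lift is simple because the curve has only one self-intersection, and its energy is $\pi\Wil$.

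\textbf{Main obstacle.} The hard step is the spectral analysis of the second variation of $\Wil$ at $c_2$ on the symmetric subspace: proving it is positive definite modulo the $e_3$-rotations. We would write normal variations $u(\varphi)e_3$ in Fourier modes, where symmetry forces $u(\varphi+\pi)=-u(\varphi)$, i.e. only odd modes. We would then compute the fourth-order operator explicitly; for a geodesic on $\sphere^2$ it is a constant-coefficient polynomial in $\partial_s$ acting on each mode. The point to check is that mode $1$, the $\sin\varphi$ mode, is strictly positive, since it is the borderline mode in this analysis. If it turned out to be negative, we would instead rely on the corrected Proposition 6 and the energy bound below $8\pi/\sqrt2$ to force the limit critical value to equal $4\pi$.
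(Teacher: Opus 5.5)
Your route is valid but differs from the paper's in the decisive step. The paper never uses local stability of $E\oplus E$. It takes global existence and full convergence of the flow from the literature. It then uses $\Wil(\gamma^\varepsilon)<13<\frac{6\pi}{\sqrt2}$ together with the lower bound (\ref{Wil.estimate.below}) for all non-geodesic elastica, which leaves only $E$ or $E\oplus E$ as possible limits. Finally, it excludes $E$ with the regular-homotopy invariant $\textnormal{ind}_2\in\pi_1(\textnormal{SO}(3))\cong\ganz/2$.

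You instead trap the flow near $c_2$ by \L{}ojasiewicz--Simon stability of a local minimiser within your symmetric class. This buys you freedom from the elliptic-integral energy bounds and from the topological invariant, and it shows the flow stays $C^k$-close to $c_2$ for all times. The cost is the LS stability machinery for curves in $\sphere^2$ and an unquantified smallness of $\varepsilon$. The paper's argument, by contrast, is global: it works for any initial curve regularly homotopic to $E\oplus E$ with energy in $(4\pi,13)$.

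The spectral check you leave open does go through, and it also settles Step 1. Use arclength $s=2\varphi\in[0,4\pi)$. The second variation of $\Wil$ at $c_2$ in direction $w\,e_3$ is $\int_0^{4\pi}(2(w''+w)^2+w'^2-w^2)\,ds$, with Fourier symbol $(\omega^2-1)(2\omega^2-1)$ for $\omega\in\frac12\nat_0$. Fields with $w(\varphi+\pi)=-w(\varphi)$ have $\omega\in\frac12+\nat_0$. On these the symbol is $\geq\frac38$ and grows like $2\omega^4$. So the Hessian is coercive on your class, and $\Wil(\gamma_0^\varepsilon)=4\pi+\frac{3\pi}{8}\varepsilon^2+O(\varepsilon^3)$.

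Your stated motivation for the symmetry is inverted, though. The $\pi$-periodic fields move both sheets together, while the antiperiodic ones you keep are exactly the sheet-separating ones. Moreover, by (\ref{Stable.elastic.curves}) no normal direction lowers the energy to second order. What the symmetry actually does is remove the only zero modes, $\omega=1$. These are $\cos2\varphi$ and $\sin2\varphi$, the rotations about horizontal axes. Removing them makes local minimality cheap, which is precisely what the paper chose not to establish.

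Two smaller points. Your first route to $\Wil(\gamma_0^\varepsilon)>4\pi$ does not follow from Proposition 6, which concerns only critical values. Your closing fallback would also fail without $\textnormal{ind}_2$, since the limit energy $2\pi$ would not be excluded.
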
  
\proof In order to prove Theorem \ref{bad.convergence.2}   
we will essentially follow the ideas and arguments of 
the proof of Theorem 6.1 in the author's paper \cite{Ruben.MIWF.2025}. However, the basic technical 
trick will be here to employ rather the \emph{stability} 
than \emph{instability} of a particular 
\emph{geodesic elastic curve} on the $2$-sphere.    
As in that proof we firstly have to guess the initial path 
of a suitable flow line $\{\gamma_t\}_{t \geq 0}$   
of the elastic energy flow (\ref{elastic.energy.flow}), 
and the initial path we choose here will be  
a particular tiny perturbation of the $2$-fold repetition  
$E \oplus E$ of the standard, constant speed-parametrization ``$E$'' of some fixed great circle on $\sphere^2$.
\footnote{We remark here that there are further alternative
choices of the initial path of some promising flow line 
of flow (\ref{elastic.energy.flow}). 
Still the idea to concretely focus on some tiny perturbation 
of the $2$-fold great circle on $\sphere^2$ appears 
to be most suitable for our particular purpose.}  
As we will see between formula (\ref{Stable.elastic.curves.2})
and inequality (\ref{distorted.curve})
we will actually have a lot of freedom  
in order to concretely define a section of the 
normal bundle along $E \oplus E$, which will perturb the 
double loop parametrization $E \oplus E$ in such a way 
that the elastic energy $\Wil$ can only \emph{increase},
without having to cope with any technical computations.  \\
As in the proof of Theorem 6.1 in \cite{Ruben.MIWF.2025} we firstly recall the important fact that the shapes of \emph{non-geodesic} closed elastic curves on $\sphere^2$ are determined by the symmetries of the Jacobi elliptic  
$\textnormal{cn}$-functions and have therefore been classified - up to rotations and reflections of $\sphere^2$ - in the first part of Proposition 6 of \cite{Ruben.Hopf.Willmore.2023}.
On account of this classification every \emph{non-geodesic} elastic curve on $\sphere^2$ can be uniquely characterized by its number $n \in \nat$ of consequtive lobes and 
its number $m\in \nat$ of trips along some appropriately chosen ``equator'' before it closes up, 
where $m$ and $n$ have to be coprime positive integers 
satisfying $\frac{m}{n}\in (0,2-\sqrt{2})$; see here 
formula (104) in \cite{Ruben.Hopf.Willmore.2023}. 
We therefore adopt the notation ``$\gamma_{(m,n)}$'' from Proposition 6 in \cite{Ruben.Hopf.Willmore.2023} also here, where $\gamma_{(m,n)}$ represents the isometry class of all elastica on $\sphere^2$ having $n$ consequtive lobes while they perform exactly $m$ trips along some fixed great 
circle on $\sphere^2$. \\
Next, concerning the \emph{second variation} of the elastic energy $\Wil$ in its critical points we recall here Langer's and Singer's Theorem 3.1 in their paper \cite{Langer.Singer.1987}, which implies particularly that each \emph{non-geodesic} elastic curve on $\sphere^2$
- performing only one loop through its trace -  
and additionally the $f$-fold cover of any great circle, 
for each $f\geq 4$, is an $\Wil$-\emph{unstable} critical point of the elastic energy, whereas 
\emph{the single, $2$-fold and $3$-fold cover of any great circle} is an $\Wil$-\emph{stable} critical point of the elastic energy. Focussing on the $2$-fold cover of some great circle in $\sphere^2$, the latter statement means 
precisely that the second variation of 
$\Wil$ in the constant speed-parametrization 
$\gamma^*:=E\oplus E$ of this double loop and in direction 
of any smooth normal vector field 
$\vec F_{\gamma^*}:=\phi_1\, N_{E} \oplus \phi_2\, N_{E}$
along $\gamma^*$ satisfies:
\begin{equation}  \label{Stable.elastic.curves}
(\delta^2\Wil)_{\gamma^*}
(\vec F_{\gamma^*},\vec F_{\gamma^*})
= 8 \pi\, \int_0^1 \Big{(}\frac{\tilde z''(t)}{4\pi}  
+ 4\pi  \tilde z(t) \Big{)}\, \Big{(}\frac{\tilde z''(t)}{4\pi} 
+ 2\pi \tilde z(t) \Big{)}\, dt  \geq 0.
\end{equation}  
Here $N_{E}$ denotes one of the two unit normal vector fields along the fixed great circle $E$, 
$\phi_1,\phi_2: \sphere^1 \setminus \{1\} \longrightarrow \rel$ denote two arbitrarily chosen, but compatible smooth functions such that $\vec F_{\gamma^*}$
is a well-defined and smooth normal vector field along 
the double loop $\gamma^*$, and 
$\tilde z:[0,1]\longrightarrow \rel$ denotes the primitive with $\tilde z(0)=0$ of the periodic, rescaled function 
$t \mapsto (\phi_1\, \oplus \phi_2)(\exp(4\pi it))$ on 
$\rel/\ganz$; 
compare here with p. 147 in \cite{Langer.Singer.1987}.  \\ 
Only the fact that the weak inequality in (\ref{Stable.elastic.curves}) holds 
for any smooth normal variation along $\gamma^*$, i.e. 
for any variational vector field $\vec F_{\gamma^*}$ 
along $\gamma^*$ of the mentioned form 
$\phi_1\, N_{E} \oplus \phi_2\, N_{E}$, does not automatically imply that the double loop $E\oplus E$ is a 
\emph{local minimizer} - say with respect to the $C^2(\sphere^1,\rel^3)$-norm - 
of the functional $\Wil$ in 
$C^{\infty}_{\textnormal{reg}}(\sphere^1,\sphere^2)$, 
and in view of our modest goal it would be unnecessarily difficult to clarify here, whether $E\oplus E$ is actually 
a local minimizer of $\Wil$ in 
$C^{\infty}_{\textnormal{reg}}(\sphere^1,\sphere^2)$. 
However, one can use the simple, concrete structure of the path $\gamma^*=E\oplus E$ respectively of the associated 
vector space of smooth normal vector fields 
$\phi_1\, N_{E} \oplus \phi_2 N_{E}$
from (\ref{Stable.elastic.curves}), in order to derive
the useful formula 
\begin{eqnarray}  \label{Stable.elastic.curves.2}
(\delta^2\Wil)_{E\oplus E}
(\phi_1\, N_{E} \oplus \phi_2 N_{E},
\phi_1\, N_{E} \oplus \phi_2 N_{E})    \nonumber    \\
= (\delta^2\Wil)_{E}(\phi_1\, N_{E},\phi_1 N_{E})
+(\delta^2\Wil)_{E}(\phi_2\, N_{E},\phi_2 N_{E}) 
\end{eqnarray}  
for those pairs of smooth functions 
$\phi_1,\phi_2:\sphere^1 \setminus \{1\} \longrightarrow \rel$ which are actually smooth on the entire circle $\sphere^1$, i.e. exactly for those pairs of smooth functions 
$\phi_1,\phi_2$ on $\sphere^1 \setminus \{1\}$ 
whose values close up smoothly at the 
particular point $1$ in $\sphere^1$ respectively, because 
in this case the integral on the right-hand side in (\ref{Stable.elastic.curves}) splits exactly at $t=1/2$ 
into two seperate integrals which compute the 
second variations of $\Wil$ in the single loop 
$E$ and in directions of the smooth normal vector fields 
$\phi_j\, N_{E}$, $j=1,2$, at a time.  
On the other hand, it is well-known that a single loop 
$E$ through a fixed great circle is the unique global minimizer of the elastic energy $\Wil$ in entire $C^{\infty}_{\textnormal{reg}}(\sphere^1,\sphere^2)$ - 
up to smooth reparametrizations and isometries of 
$\sphere^2$. Now we can consider for instance the prominent 
function $f(x)=\exp(-\frac{1}{x^2})$, for 
$x\in \rel\setminus \{0\}$, whose extension by 
$f(0):=0$ into the origin is of class $C^{\infty}(\rel)$ 
and satisfies $f^{(j)}(0)=0$ for every $j\in \nat_0$. This function can be used to construct a $2\pi$-periodic 
$C^{\infty}$-function $\Phi$ on $\rel$ 
satisfying $\Phi^{(j)}(0)=0=\Phi^{(j)}(2\pi)$ 
for every $j\in \nat_0$ and additionally $\Phi(t)>0$ 
for every $t \in (0,2\pi)$. Hence, introducing two 
functions $\phi_1,\phi_2: \sphere^1 \longrightarrow \rel$ by        
$$ 
\phi_1(\exp(it)):=\Phi(t) \quad \textnormal{and} \quad   
\phi_2(\exp(it)):=-\Phi(t), \quad \textnormal{for} \, 
t\in [0,2\pi],
$$ 
we can easily check that they are well-defined and 
$C^{\infty}$-smooth on entire $\sphere^1$, that there holds $\phi_1(1)=0=\phi_2(1)$ 
and that the values of their derivatives of any order  
coincide at the point $1\in \sphere^1$ - such that the resulting normal vector field  
$\vec F_{\gamma^*}:=\phi_1\, N_{E} \oplus \phi_2\, N_{E}$
along $\gamma^*=E \oplus E$ is well-defined and smooth - and that they will certainly generate \emph{two positive values} in the sum on the right-hand side of (\ref{Stable.elastic.curves.2}).
Furthermore we can immediately infer from  
the special construction of the two 
functions $\phi_1,\phi_2$ on $\sphere^1$ 
respectively of the normal vector field
$\vec F_{\gamma^*}=\phi_1\, N_{E} \oplus \phi_2\, N_{E}$
that the corresponding perturbation 
$\exp_{\gamma^*}(\varepsilon \vec F_{\gamma^*})$ 
of the double loop $\gamma^*=E\oplus E$ is a 
$C^{\infty}$-smooth, closed, and regular path on 
$\sphere^2$ having only one point of self-intersection, 
at least for sufficiently small and positive $\varepsilon$. \\
Now, inserting this special result regarding the left-hand side in (\ref{Stable.elastic.curves.2}) into a Taylor-expansion 
of the $C^{\infty}$-smooth function $\varepsilon \mapsto 
\Wil(\exp_{\gamma^*}(\varepsilon \vec F_{\gamma^*}))$
about $\varepsilon=0$ we conclude, that the 
energy function $\varepsilon \mapsto 
\Wil(\exp_{\gamma^*}(\varepsilon \vec F_{\gamma^*}))$ 
has to increase above $\Wil(\gamma^*)$, for 
$\varepsilon \in (0,\varepsilon_0)$, provided 
$\varepsilon_0>0$ is chosen here sufficiently small.   
Hence, we have determined a certain family  
$\{\gamma^{\varepsilon}\}_{\varepsilon \in [0,\varepsilon_0)}
:=\{\exp_{\gamma^*}(\varepsilon \vec F_{\gamma^*})\}
_{\varepsilon \in [0,\varepsilon_0)}$ 
of $C^{\infty}$-smooth and regular closed paths, 
with $\gamma^0 = E\oplus E \equiv \gamma^*$, 
such that for every fixed $\varepsilon \in (0,\varepsilon_0)$ 
the path $\gamma^{\varepsilon}$ travels $2$ times about $\sphere^2$ along the chosen great circle $\textnormal{trace}(E)$ - just as $\gamma^*$ itself does -
\emph{intersects itself only once} and satisfies: 
\begin{equation} \label{distorted.curve} 
13 > \Wil(\gamma^{\varepsilon}) 
>\Wil(\gamma^*) = 4 \pi,
\end{equation}  
provided $\varepsilon_0>0$ was chosen sufficiently small.
On account of inequality (\ref{distorted.curve}) 
we shall henceforth consider the unique flow line 
$\{\gamma^{\varepsilon}_t\}$ of flow 
(\ref{elastic.energy.flow}) starting in such 
a distorted closed path $\gamma^{\varepsilon}$, for some 
arbitrarily chosen $\varepsilon \in (0,\varepsilon_0)$,    
and we can start to prepare our final argument - 
a topological argument which is simpler than the one 
that finished the proof of 
Theorem 6.1 in \cite{Ruben.MIWF.2025} and is 
\emph{not an argument by contradiction}. To this end, 
we recall from the proof of Theorem 1, parts (I) and (II), 
in \cite{Ruben.Hopf.Willmore.2023} respectively from the main result of \cite{Dall.Acqua.Pozzi.2018} - here additionally combined with Theorem 1.2 in \cite{Dall.Acqua.Spener.2016} - that $\{\gamma^{\varepsilon}_t\}_{t \geq 0}$ exists globally   
and converges fully and smoothly to a particular elastic curve 
$\gamma^{\varepsilon}_{\infty}$ in $\sphere^2$. 
More precisely, there exists some smooth family of smooth diffeomorphisms 
$\varphi_t:\sphere^1 \stackrel{\cong}\longrightarrow \sphere^1$ such that
\begin{equation} \label{full.smooth.convergence}
\gamma^{\varepsilon}_t \circ \varphi_t 
\longrightarrow \gamma^{\varepsilon}_{\infty} \,\,\, \textnormal{in}\,\,\, C^{k}(\sphere^1,\rel^3), \,\,\, 
\textnormal{for every}\,\,k \in \nat_0, 
\end{equation}	
as $t\to \infty$, where the limit path 
$\gamma^{\varepsilon}_{\infty}:\sphere^1 \longrightarrow \sphere^2$ parametrizes a smooth elastic curve on $\sphere^2$ with constant speed - possibly performing several loops through its trace. Since $\gamma^{\varepsilon}_0\equiv \gamma^{\epsilon}$ satisfies inequality (\ref{distorted.curve}), we can infer from the monotonicity 
of the elastic energy along the global flow line $\{\gamma^{\varepsilon}_t\}$ that  
\begin{equation}  \label{Wil.goes.down}
\Wil(\gamma^{\varepsilon}_{\infty})
\leq \Wil(\gamma^{\varepsilon}) < 13.
\end{equation} 
Following the strategy of the proof of Theorem  
6.1 in \cite{Ruben.MIWF.2025} we are going to use  
estimate (\ref{Wil.goes.down}) in order to verify, 
that \emph{all non-geodesic elastic curves} $\gamma_{(m,n)}$ 
must have higher elastic energies than both our chosen 
initial path $\gamma^{\varepsilon}$ and the corresponding  
limit path $\gamma^{\varepsilon}_{\infty}$ from 
(\ref{full.smooth.convergence}) have. 
To this end, we quickly recall here from Section 4 in 
\cite{Langer.Singer.1987} and from the proofs 
of Proposition 6 in \cite{Ruben.Hopf.Willmore.2023} 
and Theorem 6.1 in \cite{Ruben.MIWF.2025} that the wavelength 
$\Lambda(\gamma_{(m,n)})=\frac{m}{n}2 \pi$ of the 
closed non-geodesic elastica ``$\gamma_{(m,n)}$'' 
can be systematically recovered as certain values of the function (102) in \cite{Ruben.Hopf.Willmore.2023}, which depends on the modulus $p \in \big{(}0,\frac{1}{\sqrt{2}}\big{)}$ 
of the Jacobi elliptic function appearing in formula (97) 
of \cite{Ruben.Hopf.Willmore.2023},   
and this particular function turned out to be 
\emph{strictly monotonically decreasing} on account of 
formula (103) in \cite{Ruben.Hopf.Willmore.2023}. 
This insight has two important ramifications: 
($\alpha$) it yields a one-to-one correspondence between 
all quotients $\frac{m}{n}\in (0,2-\sqrt{2})$ with $\textnormal{gcd}(m,n)=1$ and exactly those moduli 
$p=p(m,n)\in \big{(}0,\frac{1}{\sqrt{2}}\big{)}$ which 
produce the squared curvatures of 
\emph{non-geodesic closed elastic curves} on $\sphere^2$
on account of formula (97) in \cite{Ruben.Hopf.Willmore.2023} -
leading in turn to the above mentioned statement 
of the first part of Proposition 6 in \cite{Ruben.Hopf.Willmore.2023} - and   
($\beta$) it hints at a possibility to explicitly compute 
the energy $\Wil$ of any non-geodesic elastic curve $\gamma_{(m,n)}$ in terms of certain elliptic integrals which depend on the uniquely corresponding modulus $p=p(m,n)$, 
and this aim was finally accomplished in formula (106) of \cite{Ruben.Hopf.Willmore.2023}: 
\begin{equation} \label{energy.directly}    
\Wil(\gamma_{(m,n)}) = \frac{8n}{\sqrt{2-4(p(m,n))^2}} \,\big{(}2E(p(m,n))-K(p(m,n))\big{)},  
\end{equation} 	             
compare here also with formula (159) in \cite{Ruben.MIWF.2025}.  
Moreover, of fundamental importance for the proof 
of estimate (\ref{Wil.estimate.below}) below is the additional 
observation that the function 
$f(p):=\frac{1}{\sqrt{1-2p^2}} \,(2E(p)-K(p))$, 
appearing on the right-hand side of formula (\ref{energy.directly}), is strictly monotonically increasing on its entire domain $\big{(}0,\frac{1}{\sqrt{2}}\big{)}$; 
see here the computations on p. 38 in \cite{Ruben.Hopf.Willmore.2023}. \\   
Now, as in formula (161) of \cite{Ruben.MIWF.2025} 
we fix some number $m^*$ of trips about some  
great circle in $\sphere^2$ - here in this proof  
without any additional condition on $m^*\in \nat$ - 
and we roughly estimate the elastic energy $\Wil(\gamma_{(m^*,n)})$ from below by means 
of formula (\ref{energy.directly}), 
combined with the above mentioned monotonicity of the function $f(p)=\frac{1}{\sqrt{1-2p^2}} \,(2E(p)-K(p))$ 
on $\big{(}0,\frac{1}{\sqrt{2}}\big{)}$ and the particular 
facts that $f(0)= 2E(0)-K(0)=\frac{\pi}{2}$ 
and that each \emph{admissible pair} of indices $(m,n)$ satisfies\, $0<\frac{m}{n}<2-\sqrt{2}$:
\begin{equation} \label{Wil.estimate.below}     
\Wil(\gamma_{(m^*,n)}) 
> \frac{8\cdot \frac{3}{2}\,m^*}{\sqrt{2}} \,
f(p(m^*,n)) \geq \frac{6\,m^*}{\sqrt{2}} \,\pi 
\geq \frac{6}{\sqrt{2}} \,\pi \approx 13.32865, 
\end{equation} 
for every admissible pair $(m^*,n)$, i.e. for 
every trip number $m^*\in \nat$.  
Obviously, the lower bound $13.32865$ is still larger than 
$\Wil(\gamma^{\varepsilon}_{\infty})$ by estimate  (\ref{Wil.goes.down}). Hence 
comparing estimate (\ref{Wil.estimate.below})
with statement (\ref{Wil.goes.down}) and with the energies 
of geodesic elastica on $\sphere^2$, i.e. with 
the values $2k\pi$ for each $k\in \nat$, 
we can easily conclude that there are only the following 
two different possibilities for the 
\emph{elastic} limit path in (\ref{full.smooth.convergence}): 
\begin{equation} \label{two.terminal.curves}
(A):\,\, \gamma^{\varepsilon}_{\infty} = E \quad \,\,\, 
(B): \,\, \gamma^{\varepsilon}_{\infty}=E \oplus E,
\end{equation}  
at least up to appropriate isometries of $\sphere^2$.
Similarly, we can infer from a comparison of both inequalities 
in (\ref{distorted.curve}) with the energies $2k\pi$
of geodesic elastica on $\sphere^2$
and again with estimate (\ref{Wil.estimate.below}) 
for every trip number $m^*\in \nat$, 
that our favoured candidate 
$\{\gamma^{\varepsilon}_t\}_{t \geq 0}$ cannot start 
moving in an elastic curve and hence has to be  
a non-trivial solution of (\ref{elastic.energy.flow}).
In other words, the elastic energy 
has to decrease strictly monotonically along 
$\{\gamma^{\varepsilon}_t\}$ respectively along its 
smoothly convergent reparametrization 
$\{\gamma^{\varepsilon}_t \circ \varphi_t\}$ 
from (\ref{full.smooth.convergence}). \\ 
As in the proof of Theorem 6.1 in \cite{Ruben.MIWF.2025} 
we are finally going to combine the above insights with a particular, appropriate \emph{homotopy invariant} on the set  $C^{\infty}_{\textnormal{reg}}(\sphere^1,\sphere^2)$ 
of smooth closed regular paths on $\sphere^2$. 
\footnote{In the proof of Theorem 6.1 in \cite{Ruben.MIWF.2025} we were particularly interested 
in more sophisticated invariants on the subset of \emph{generic} smooth closed paths in $\rel^2$ respectively $\sphere^2$ in the sense of Arnold \cite{Arnold.1993}, which is not necessary in our relatively simple situation.}   
We cannot a-priori rule out the possibility that the considered global flow line $\{\gamma^{\varepsilon}_t\}_{t\geq 0}$ might cover the entire $2$-sphere. In this case, there is no point $b$ 
on the $2$-sphere such that some appropriate stereographic projection $\Psi_b:\sphere^2 \setminus \{b\} \longrightarrow \rel^2$ would project the entire flow line $\{\gamma^{\varepsilon}_t\}_{t\geq 0}$ from $\sphere^2$ into $\rel^2$. 
Therefore, we cannot apply here \emph{Whitney-Graustein's} 
tangent-rotation number ``$\textnormal{ind}$'' 
on $C^{\infty}_{\textnormal{reg}}(\sphere^1,\rel^2)$ - 
a fairly elementary invariant which reflects the basic topological fact that 
$\pi_1(\sphere^1,(1,0))\cong \ganz$. 
Instead, we are going to invoke the well-known 
diffeomorphism between the subbundle  
$T^1\sphere^2$ of the tangent bundle of $\sphere^2$ 
consisting of \emph{unit} tangent vectors and the Lie group $\textnormal{SO}(3)$ and then use the prominent fact that $\pi_1(\textnormal{SO}(3),\textnormal{Id}) 
\cong \ganz/2$. Let's construct this 
particular invariant 
$\textnormal{ind}_2:
C^{\infty}_{\textnormal{reg}}(\sphere^1,\sphere^2)
\longrightarrow \ganz/2$ concretely: \\
Every closed and \emph{regular} path 
$\gamma:\sphere^1 \longrightarrow \sphere^2$ gives 
rise to its field of unit tangents 
$\frac{\gamma'(s)}{|\gamma'(s)|}\in T^1_{\gamma(s)}\sphere^2$ along $\gamma$ which is first of all a closed path in $\rel^3$, but more precisely a closed path in the total space of the circle-bundle $T^1\sphere^2$. 
Hence, the field of unit tangents 
$s\mapsto \frac{\gamma'(s)}{|\gamma'(s)|}$ of any closed regular path $\gamma$ becomes a closed path in $\textnormal{SO}(3)$ when being composed with the standard diffeomorphism $\Theta:T^1\sphere^2\stackrel{\cong}\longrightarrow \textnormal{SO}(3)$. It is well-known 
that the topological meaning of the isomorphism $\pi_1(\textnormal{SO}(3),\textnormal{Id}) 
\cong \ganz/2$ is given by the fact that any non-constant, closed and continuous path in $\textnormal{SO}(3)$ is already contractible within $\textnormal{SO}(3)$ if it is homotopic 
in $\textnormal{SO}(3)$ to some closed path which consists 
of an \emph{even number} of identical loops. 
This motivates us to consider the map
$$ 
\textnormal{ind}_2:
C^{\infty}_{\textnormal{reg}}(\sphere^1,\sphere^2)
\longrightarrow \pi_1(\textnormal{SO}(3))  
\cong \ganz/2 \,\,\, \textnormal{by} \,\,\,
\gamma \mapsto \Big{[}s\mapsto \Theta 
\Big{(}\frac{\gamma'(s)}{|\gamma'(s)|}\Big{)}\Big{]}. 
$$  
Obviously, this map is an \emph{invariant} with respect to regular homotopies between smooth, closed and regular paths in $\sphere^2$, because any \emph{regular} homotopy 
$H:\sphere^1 \times [0,1] \longrightarrow \sphere^2$ 
between two paths $\gamma_1, \gamma_2 \in 
C^{\infty}_{\textnormal{reg}}(\sphere^1,\sphere^2)$ 
yields the well-defined and continuous map 
$(s,t) \mapsto \frac{\partial_s H(s,t)}{|\partial_s H(s,t)|}$ 
from $\sphere^1 \times [0,1]$ into $T^1\sphere^2$,       
which constitutes a homotopy 
between the fields of unit tangents of $\gamma_1$ and 
$\gamma_2$ in $T^1\sphere^2$, implying 
$\textnormal{ind}_2(\gamma_1)=\textnormal{ind}_2(\gamma_2)$, 
as required.  
Moreover we infer from the above discussion that the index $\textnormal{ind}_2$ of some path $\gamma \in C^{\infty}_{\textnormal{reg}}(\sphere^1,\sphere^2)$ 
vanishes, if and only if the induced closed path  
$s\mapsto \Theta \Big{(}\frac{\gamma'(s)}{|\gamma'(s)|}\Big{)}$ in $\textnormal{SO}(3)$ is homotopic to some closed path in 
$\textnormal{SO}(3)$ which traverses an \emph{even number} of 
identical loops. \\
Now, the considered closed initial path   
$\gamma^{\varepsilon}$ was constructed in such a way that 
it can be deformed into the standard parametrization 
of a $2$-fold covered great circle in $\sphere^2$ 
by means of an appropriate regular homotopy. 
Hence, by invariance of our $\ganz/2$-index with 
respect to regular homotopies we immediately conclude that $\textnormal{ind}_2(\gamma^{\varepsilon})
=\textnormal{ind}_2(E\oplus E)=0$. 
On the other hand, we also obtain the values of our $\ganz/2$-index on the two possible elastic limit 
curves in (\ref{two.terminal.curves}) 
without any computation: the simple loop $E$ in option (A) 
has $\ganz/2$-index $1$, whereas   
the double loop in option (B) has $\ganz/2$-index $0$.
Since the restriction 
$\{\gamma^{\varepsilon}_t \circ \varphi_t\}_{t\in [0,T]}$
of the reparametrized convergent flow line $\{\gamma^{\varepsilon}_t \circ \varphi_t\}_{t \geq 0}$ 
in (\ref{full.smooth.convergence}) to some arbitrarily 
large compact interval $[0,T]$ is actually a 
\emph{regular homotopy} between smooth, closed and regular paths on $\sphere^2$, we only have to compare the computed 
values of $\textnormal{ind}_2(\gamma^{\varepsilon}_{\infty})$ 
on the two remaining possible limit curves in (\ref{two.terminal.curves}) with the value $\textnormal{ind}_2(\gamma^{\varepsilon})=0$ 
and conclude from the homotopy invariance of 
$\textnormal{ind}_2$ that the limit in (\ref{full.smooth.convergence}) of 
$\{\gamma^{\varepsilon}_t \circ \varphi_t\}$   
must be the constant speed-parametrization of some 
$2$-fold covered great circle on $\sphere^2$, i.e. that
\begin{equation} \label{full.smooth.convergence.2}
\gamma^{\varepsilon}_t \circ \varphi_t 
\longrightarrow E \oplus E \,\,\, \textnormal{in}\,\,\, 
C^{k}(\sphere^1,\rel^3), \,\,\, 
\textnormal{for every}\,\,k \in \nat_0, 
\end{equation}	
as $t\to \infty$. Finally, because of  
$\Wil(E \oplus E)=4\pi$ and since the elastic energy 
depends continuously on the perturbation parameter 
$\varepsilon$ of the smooth family  
$\gamma^{\varepsilon}
=\exp_{\gamma^*}(\varepsilon \vec F_{\gamma^*})$
for $\varepsilon \in [0,\varepsilon_0)$, perturbing 
exactly the path $E\oplus E \equiv \gamma^*$,  
we can thus choose for every $\delta>0$ some 
sufficiently small $\varepsilon$ in $(0,\varepsilon_0)$ 
such that 
$4\pi <\Wil(\gamma^{\varepsilon})<4\pi+\delta$ holds.
Moreover, as pointed out above, for every $\varepsilon$ in $(0,\varepsilon_0)$ the closed path 
$\gamma^{\varepsilon}$ intersects itself only once.   
Hence, on account of the convergence of 
the reparametrized flow line
$\{\gamma^{\varepsilon}_t \circ \varphi_t\}$ in  (\ref{full.smooth.convergence.2}) the path 
$\gamma^{\varepsilon}$ has indeed all those properties 
which have been required from the initial path ``$\gamma_0$'' 
in the formulation of Theorem \ref{bad.convergence.2}, and Theorem \ref{bad.convergence.2} is proved.     
\qed

\end{document}